\newtheorem{theorem}{Theorem}[section]
\newtheorem{lemma}[theorem]{Lemma}
\newtheorem{corollary}[theorem]{Corollary}
\theoremstyle{definition}
\newtheorem{definition}[theorem]{Definition}
\newtheoremstyle{customNumber}
     {}         
     {}          
     {\itshape} 
     {}          
     {\bfseries} 
     {.}         
     { }         
     {\thmname{#1}\thmnumber{ #2}\thmnote{ #3}}
\theoremstyle{customNumber}
\renewcommand{\phi}{\varphi}
\renewcommand{\rho}{\varrho}
\renewcommand{\epsilon}{\varepsilon}
\newcommand{\norm}[1]{\lVert#1\rVert}
\newcommand{\abs}[1]{\lvert#1\rvert}
\DeclareMathOperator{\CAT}{CAT}
\DeclareMathOperator{\R}{\mathbb{R}}
\DeclareMathOperator{\locc}{loc}
\DeclareMathOperator{\linn}{lin}
\mathchardef\mhyphen="2D
\numberwithin{equation}{section}
\title{On a problem of Mazur and Sternbach}
\author{Giuliano Basso}
\address{Max Planck Institute for Mathematics,
Vivatsgasse 7,
53111 Bonn,
Germany}
\email{basso@mpim-bonn.mpg.de}
\begin{document}

\begin{abstract}
We investigate Problem 155 form the “Scottish Book” due to S.~Mazur and L. Sternbach. In modern terminology they asked if every bijective, locally isometric map between two real Banach spaces is always a global isometry. Recently, an affirmative answer when the source space is separable was obtained by M. Mori, using techniques related to the Mazur-Ulam theorem and its generalizations. The main purpose of this short note is to offer a different approach to Problem 155 motivated by recent advances in metric geometry. We show that it has an affirmative answer under the additional assumption that the map considered is a local isometry. 
\end{abstract}

\maketitle

\section{Introduction}

In this short note we use recent advances in metric geometry to investigate Problem 155 from the “Scottish Book" due to Mazur and Sternbach. On November~18, 1936 they asked the following question:

\begin{displayquote}
Given are two spaces \(X\), \(Y\) of type \((B)\). \(y = U(x)\) is a one-to-one mapping of the space \(X\) onto the whole space \(Y\) with the following property: For every \(x_0 \in X\) there
exists an \(\epsilon > 0\) such that the mapping \(y = U(x)\), considered for \(x\) belonging to the sphere with the center \(x_0\) and radius \(\epsilon\), is an isometric mapping. Is the mapping \(y = U(x)\) an isometric transformation? This theorem is true if \(U^{-1}\) is continuous. This is the case, in particular, when \(Y\) has a finite number of dimensions or else the following property: If \(\norm{y_1 +y_2} =
\norm{y_1}+\norm{y_2}, y_1 \neq 0\), then \(y_2 = \lambda y_1\), \(\lambda \geq 0\).
\end{displayquote}

In modern terminology, \(X\) and \(Y\) are real Banach spaces and a sphere with center \(x_0\) and radius \(\epsilon\) corresponds to the closed ball \(B(x_0, \epsilon)=\{ x\in X : \norm{x-x_0} \leq \epsilon\}\). In the following, all Banach spaces are assumed to be over \(\R\). Let us also clarify the terminology surrounding distance-preserving mappings. We say that \(f\colon X \to Y\) is \textit{isometric} if \(\norm{f(x)-f(y)}=\norm{x-y}\) for all \(x\), \(y\in X\). Here, we use the symbol \(\norm{\, \cdot \, }\) to denote both the norm in \(X\) and \(Y\). If every \(x\in X\) has an open neighborhood \(V\) such that \(f|_{V}\) is isometric then we say that \(f\) is a \textit{locally isometric}. Moreover, a surjective isometric map is called an \textit{isometry}. Thus, Problem 155 may be reformulated as follows: 

\begin{displayquote}
Let \(U\) be a bijective map between Banach spaces. Suppose \(U\) is locally isometric. Is it true that \(U\) is an isometry?
\end{displayquote}

For a long time no progress on this question has been recorded. In Mauldin's edited version \cite{mauldin-2015} of the “Scottish Book", it is listed as unsolved and no comments are added to it. To this date, only approximately a quarter of the problems of the book are still open. Very recently, almost ninety years after the question was initially posed, a first progress has been achieved by M. Mori (see \cite{mori2023scottish}). He proved the following result.

\begin{theorem}[Theorem~3 in \cite{mori2023scottish}] Let \(X\) be a separable Banach space, \(Y\) any Banach space and \(U\colon X\to Y\) a surjection with the following property: For every \(x_0\in X\) there exists \(\epsilon_0 >0\) such that \(U\) restricted to \(B(x_0, \epsilon_{x_0})\) is isometric. Then \(U\) is an isometry.    
\end{theorem}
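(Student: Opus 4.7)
My approach combines a segment-covering argument with Baire category. The strategy is: (i) show $U$ is $1$-Lipschitz via a segment covering; (ii) use separability and Baire category to locate a point where $U$ is locally an open isometric bijection onto an open ball; (iii) propagate this globally through Mankiewicz's theorem and connectedness, recovering $U$ as a bijective affine isometry.

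For step (i), fix $a, b \in X$ and cover the compact segment $[a,b]$ by finitely many balls on which $U$ is isometric. Choosing a partition $a = z_0, \dots, z_N = b$ with consecutive $z_i$'s in a common such ball, the triangle inequality gives
\[
\norm{U(a) - U(b)} \le \sum_{i=0}^{N-1} \norm{U(z_{i+1}) - U(z_i)} = \sum_{i=0}^{N-1} \norm{z_{i+1} - z_i} = \norm{a - b}.
\]
Thus $U$ is continuous and $Y = U(X)$ is separable.

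For step (ii), the Lindelöf property of separable $X$ provides a countable cover by balls $B(x_n, r_n)$ on which $U$ is isometric. The sets $F_n := U(B(x_n, r_n))$ are closed in $Y$ (isometric images of complete sets) and cover $Y$; Baire category in the Banach space $Y$ then produces some $F_{n_0}$ with non-empty interior. Since spheres in Banach spaces have empty interior, one extracts an open ball $W \subset F_{n_0}$ whose preimage under the isometric restriction $U|_{B(x_{n_0}, r_{n_0})}$ lies in the interior of $B(x_{n_0}, r_{n_0})$; this preimage is an open connected set $V \subset X$ and $U|_V \colon V \to W$ is an isometric bijection. Mankiewicz's theorem then yields a unique affine isometry $\tilde U \colon X \to Y$ extending $U|_V$, and $\tilde U$ is surjective because its image contains an open set and $U$ is onto.

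The main work is step (iii): showing $U = \tilde U$ on all of $X$. The natural attempt declares $A := \{x \in X : U = \tilde U \text{ on a neighborhood of } x\}$, which is open and contains $V$, and tries to show $A$ is also closed in the connected space $X$. For $x \in \bar A$, both $U|_{B(x,\epsilon)}$ and $\tilde U|_{B(x,\epsilon)}$ are isometric and agree on a non-empty open subset of $B(x,\epsilon)$, and one needs them to coincide on the whole ball. This rigidity assertion is the main obstacle: in non-strictly-convex Banach spaces isometric maps into $Y$ can branch after agreeing on an open set (for instance in $\ell^\infty_n$, isometric maps of an interval may diverge after coinciding on a subinterval), so one cannot argue purely at the level of isometric maps. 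A promising way around it is to study instead the self-map $\Phi := \tilde U^{-1} \circ U \colon X \to X$, which is surjective, $1$-Lipschitz, locally isometric, and fixes $V$ pointwise, and to exploit the interaction of its $1$-Lipschitz property with global surjectivity — or to invoke length-metric covering machinery from modern metric geometry — to force $\Phi = \mathrm{id}$. Once $U = \tilde U$, $U$ is a bijective affine isometry and we are done.
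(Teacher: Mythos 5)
Your steps (i) and (ii) do follow the route of the actual argument (separability plus Baire category to find a good ball, then Mankiewicz's theorem to get an affine isometry \(\tilde U\)), but the proof is not complete: step (iii), which you yourself flag as ``the main obstacle'', is precisely the missing content, and neither escape route you gesture at closes it. Deducing \(\Phi=\tilde U^{-1}\circ U=\mathrm{id}\) from ``surjective, \(1\)-Lipschitz, locally isometric, fixes an open set pointwise'' is essentially the original Mazur--Sternbach problem over again (surjective \(1\)-Lipschitz self-maps of a Banach space are very flexible; think of the backward shift on \(\ell_2\)), and the length-metric covering machinery is exactly the route of Theorem~1.2 of this note, which needs \(U\) to be a \emph{local isometry}, i.e.\ to map small balls onto open sets; under Mori's hypothesis \(U\) is only locally isometric and nothing yet guarantees open local images --- that is precisely the gap between the two theorems. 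A smaller point: in step (ii) the justification ``since spheres have empty interior'' is off target. The issue is whether \(U\) sends points of the boundary sphere \(S\) of \(B(x_{n_0},r_{n_0})\) into your ball \(W\), i.e.\ whether \(U(S)\) can cover the interior of \(U(B)\); and isometric copies of spheres \emph{can} contain open balls (a face of the unit ball of \(\ell_\infty\) is an isometric copy of the whole ball sitting inside the unit sphere). This is easily repaired: cover \(X\) by balls \(B(x_n,r_n/2)\) with \(U\) isometric on \(B(x_n,r_n)\); if \(W\subset U(B(x_{n_0},r_{n_0}/2))\), injectivity of \(U\) on the larger ball forces \((U|_{B(x_{n_0},r_{n_0})})^{-1}(W)\) into the smaller ball, hence into the interior.

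The idea you are missing in step (iii) is that the rigidity should be applied to \emph{affine} maps, not to isometric maps, and that the interior-of-image condition can be verified at every closure point of your set \(A\). Mori's lemma (via Mankiewicz) says: whenever \(U(B(x,\epsilon_x))\) has non-empty interior, \(U|_{B(x,\epsilon_x)}\) extends uniquely to an affine isometry of \(X\) onto \(Y\). Now take \(A=\{x: U=\tilde U \text{ near } x\}\), open and non-empty. If \(x\in \bar A\), then \(B(x,\epsilon_x)\) meets \(A\) in a non-empty open set \(O\) on which \(U=\tilde U\), so \(U(B(x,\epsilon_x))\supset \tilde U(O)\), which is open --- no Baire argument is needed here, the interior point comes for free. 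The lemma then produces an affine isometry \(\tilde U_x\) extending \(U|_{B(x,\epsilon_x)}\); since \(\tilde U_x\) and \(\tilde U\) are affine and agree on the open set \(O\), they are equal, hence \(U=\tilde U\) on all of \(B(x,\epsilon_x)\) and \(x\in A\). Thus \(A\) is closed, \(A=X\) by connectedness, and \(U=\tilde U\) is a surjective affine isometry. The branching phenomenon you worry about never enters, because once the lemma upgrades \(U|_{B(x,\epsilon_x)}\) to the restriction of an affine map, agreement on an open set is decided by the elementary fact that two affine maps coinciding on an open set coincide everywhere.
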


This settles Mazur and Sternbach's problem when \(X\) is separable. We proceed by presenting a rough sketch of Mori's proof. In \cite{mankiewicz-1972}, P. Mankiewicz proved that every isometry between balls of Banach spaces uniquely extends to an affine isometry of the whole Banach spaces. This recovers the classical Mazur--Ulam theorem. Relying on Mankiewicz's result, Mori proved a lemma stating that if \(U(B(x_0, \epsilon_{x_0}))\) has non-empty interior, then \(U|_{B(x_0, \epsilon_{x_0})}\) extends uniquely to an affine isometry \(\bar{U}\colon X \to Y\). By a straightforward application of the Baire category theorem, it follows that there exists \(x_0\in X\) satisfying the assumption of the lemma. The proof is then finished by showing that \(\bar{U}=U\) by combining the lemma with the fact that if two affine maps agree on an open set, then they are equal.  

The main goal of this note is to introduce new techniques from metric geometry to the problem. These techniques are independent of the circle of ideas surrounding the Mazur--Ulam theorem and its generalizations \cite{mankiewicz-1972, fourneau}. Our main result reads as follows.
 
\begin{theorem}\label{thm:main}
Let \(X\) and \(Y\) be Banach spaces and \(U\colon X\to Y\) a local isometry. Then \(U\) is an isometry.
\end{theorem}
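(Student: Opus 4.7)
The plan is to exploit the length-space structure of Banach spaces, rather than Mazur--Ulam-style extension theorems as in the approach of Mori, to upgrade local distance preservation of \(U\) to a global distance preservation.

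The first step is to verify that \(U\) is globally \(1\)-Lipschitz. Given \(x, y \in X\), cover the compact image of the straight segment \(\gamma(t) = (1-t)x + ty\) by finitely many open sets on each of which \(U\) is isometric, and apply a telescoping triangle inequality along a partition \(0 = t_0 < \cdots < t_n = 1\) with each \(\gamma([t_{i-1}, t_i])\) contained in one such set. This yields
\[
\norm{U(x) - U(y)} \leq \sum_{i=1}^n \norm{U(\gamma(t_i)) - U(\gamma(t_{i-1}))} = \sum_{i=1}^n (t_i - t_{i-1})\norm{x - y} = \norm{x - y}.
\]

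The crux is then to upgrade this to \(\norm{U(x) - U(y)} = \norm{x - y}\) for all \(x, y\). The natural attempt is to show that \(U \circ \gamma\) is itself a geodesic in \(Y\), not merely a rectifiable curve of length \(\norm{x - y}\). This is subtle because a target Banach space need not be strictly convex, and one-dimensional local isometries into such spaces can fold back on themselves; for instance, a suitable map \([0, L] \to \ell^\infty\) can be locally isometric without being globally so. The plan is to exploit the higher-dimensional isometric structure of \(U\) around each point of \(\gamma\): on every open ball in \(X\) on which \(U\) is isometric, \(U\) preserves the natural linear convex bicombing, and a metric-geometric argument should propagate this local rigidity along \(\gamma\) to force \(U \circ \gamma\) to be a global geodesic. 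Once distance preservation is established, \(U\) is an isometric embedding, and surjectivity would follow from the openness of \(U\) combined with the closedness of \(U(X)\) in \(Y\) (as an isometric image of the complete space \(X\)) and the connectedness of \(Y\).

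The main obstacle is the middle step, ruling out folding of \(U\) along straight segments. It requires genuinely multidimensional input from the convex structure of \(X\), and this is where recent metric-geometric tools --- for instance rigidity of convex geodesic bicombings, or properties of injective hulls of Banach spaces --- are expected to play the decisive role.
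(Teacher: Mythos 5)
Your opening and closing steps are sound: the telescoping argument along the segment does give that \(U\) is \(1\)-Lipschitz, and once distance preservation is in hand, surjectivity indeed follows from openness of \(U(X)\), its closedness (as a complete, hence closed, subset of \(Y\)), and connectedness of \(Y\). But the decisive middle step --- ruling out folding of \(U\circ\gamma\), i.e.\ upgrading the \(1\)-Lipschitz bound to equality --- is only announced, not proved. You say that a metric-geometric argument ``should propagate'' the local preservation of the linear bicombing along \(\gamma\), and you name convex bicombings and injective hulls as the expected tools, but no actual mechanism of propagation is given. That is precisely where the whole difficulty of the problem sits, and it is not a routine continuity-plus-connectedness argument along the segment: knowing that \(U\) maps small balls isometrically tells you that images of short subsegments are geodesics near each point, but nothing a priori prevents these local geodesics from assembling into a non-geodesic curve in \(Y\), exactly because \(Y\) need not be uniquely geodesic (your own \(\ell^\infty\)-type example illustrates the danger).

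For comparison, the paper does not propagate along a single segment at all. It first shows that \(U\) is a covering map (a local isometry from a complete space to a length space), hence a homeomorphism because Banach spaces are simply connected; it then pushes the linear bicombing of \(X\) forward to a convex \emph{local} bicombing on \(Y\), applies Miesch's local-to-global theorem to obtain a global convex bicombing on \(Y\) consistent with it, and identifies that bicombing with the linear one via the uniqueness theorem of Descombes and Lang. Consistency then says exactly that \(U\) is locally affine, and a separate (elementary but necessary) lemma shows that a continuous locally affine map is globally affine; after normalizing \(U(0)=0\), linearity together with the isometric behaviour near \(0\) finishes the proof. So your instinct about which tools matter is correct, but your proposal is missing the two concrete ingredients that make them bite: a local-to-global statement for convex bicombings (to produce globally defined distinguished geodesics in \(Y\)), and the locally-affine-implies-globally-affine step; without these, the ``propagation'' remains a genuine gap rather than a proof.
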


Here, a map \(f\colon X \to Y\) is said to be a \textit{local isometry} if every \(x\in X\) has an open neighborhood \(V\) such that \(f|_{V}\) is an isometry onto an open subset of \(Y\). Notice that any local isometry is locally isometric, but not vice versa.  The converse follows for example if \(U\) is a locally isometric bijection with \(U^{-1}\) continuous. Thus, the following result is a direct consequence of Theorem~\ref{thm:main}. This justifies the remark of Mazur and Sternach in their original formulation of the problem.

\begin{corollary}\label{cor:main}
Let \(X\) and \(Y\) be Banach spaces and \(U\colon X\to Y\) a locally isometric bijection. If \(U^{-1}\) is continuous, then \(U\) is an isometry. 
\end{corollary}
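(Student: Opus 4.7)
The strategy is a direct reduction to Theorem~\ref{thm:main}: under the extra hypothesis that \(U\) is a bijection with continuous inverse, every locally isometric map is automatically a local isometry in the stronger sense demanded by that theorem, so the conclusion follows immediately.

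In detail, I would fix \(x_0 \in X\) and use the locally isometric hypothesis to produce an open neighborhood \(V\) of \(x_0\) on which \(U|_V\) is isometric. The only thing left to verify in order to invoke Theorem~\ref{thm:main} is that \(U(V)\) is open in \(Y\). But \(U\) is a bijection, so \(U(V) = (U^{-1})^{-1}(V)\), and this set is open because \(V\) is open and \(U^{-1}\) is assumed continuous. Consequently \(U|_V \colon V \to U(V)\) is an isometric surjection onto an open subset of \(Y\), which is exactly the local isometry condition from the definition preceding Theorem~\ref{thm:main}. Since \(x_0\) was arbitrary, \(U\) is a local isometry, and Theorem~\ref{thm:main} yields that \(U\) is an isometry.

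There is essentially no obstacle to overcome here: the argument is a one-line observation about images of open sets under a homeomorphism. All of the substantive content lies in Theorem~\ref{thm:main}, and the continuity of \(U^{-1}\) plays precisely the role of bridging the two notions of local distance-preservation, matching the parenthetical remark made in the excerpt just before the statement of the corollary.
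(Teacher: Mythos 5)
Your proposal is correct and follows essentially the same route as the paper: both reduce the corollary to Theorem~\ref{thm:main} by using the continuity of \(U^{-1}\) to upgrade the locally isometric bijection to a local isometry. The only cosmetic difference is that you verify openness of the image via \(U(V)=(U^{-1})^{-1}(V)\), while the paper shows directly that small balls are mapped onto balls, \(U(B(x,\delta))=B(y,\delta)\); both verifications are valid and equally short.
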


We emphasize that Theorem~\ref{thm:main} (and also Corollary~\ref{cor:main}) follow directly from the results proved in \cite{mori2023scottish}. They could also be established (more traditionally) using standard techniques from metric geometry concerning local isometries and covering maps (see e.g. \cite[Section~3.4]{burago-2001}). However, as already mentioned above the main purpose of this note is to introduce new techniques which hopefully may have their own interesting aspects.

The proof of Theorem~\ref{thm:main} relies on two main ingredients from metric geometry, both of which have been proved in the last decade. A local-to-global result due to B. Miesch \cite{miesch-2017} generalizing the classical Cartan-Hadamard theorem from Riemannian geometry, and a uniqueness result concerning certain selections of geodesics in a Banach space due to D. Descombes and U. Lang \cite{descombes-2015}. With these two results at hand, it is only a matter of understanding the definitions to show that the map \(U\) in Theorem~\ref{thm:main} is locally affine, and thus globally affine by Lemma~\ref{lem:aux}. The formulations of both of these results use the notion of a bicombing. Before proceeding with the proof of Theorem~\ref{thm:main}, we partly motivate and review this notion in the next section. 

\subsection{Acknowledgements} I am very grateful to Michiya Mori for pointing out to me that the definition of \(\sigma^y\) in Section~\ref{sec:three} was incorrect in an earlier version of this article.

\section{What are bicombings?} Sometimes it can prove beneficial to consider Banach spaces not as usual in the linear category but to consider them in the metric category (as metric spaces) and then to recover results about the linear structure using metric notions. A well-known result in this direction is due to J. Lindenstrauss \cite[Theorem~5]{Lindenstrauss-1964}. He showed that a dual Banach space is an absolute linear retract if and only if it is an absolute metric retract. In the following, we discuss how linear segments in a Banach space may be recovered using only metric notions. We say that \(c\colon [0,1]\to X\) is a (metric) \textit{geodesic} if 
\[
\norm{c(s)-c(t)}=\abs{s-t}\cdot \norm{c(0)-c(1)}
\]
for all \(s\), \(t\in [0,1]\). Linear segments are prime examples of geodesics. However, generally there are plenty of other geodesics connecting two given points. For example, \(x=(1, 0)\) and \(y=(-1, 0)\) in \(\ell_\infty^2=(\R^2, \norm{\, \cdot \,}_\infty)\) can be connected by the linear geodesic \([x, y](t)=(1-t)x+t y\) but also by the “tent" geodesic with midpoint \(z=(0,1)\) defined by the concatenation \([z, y] \ast [x, z]\).
In fact, any point of the metric interval \(I(x, y)=\{z\in X : \norm{x-z}+\norm{z-y}=\norm{x-y} \}\) is part of a geodesic connecting \(x\) to \(y\). So there are really many geodesics connecting the two points. 

\begin{definition}
Let \(M\) be a metric space. We say that \(\sigma \colon M \times M \times [0,1] \to M\) is a bicombing if for all \(x\), \(y\in M\), \(\sigma_{xy}:=\sigma(x, y, \cdot)\) is a geodesic connecting \(x\) to \(y\).
\end{definition}

Essentially, a bicombing is a selection of a geodesic for each pair of points in a metric space. For example, if \(X\) is a Banach space then \(\sigma^{\linn}\colon X \times X \times [0,1]\to X\) defined by
\[
\sigma^{\linn}(x, y, t)=(1-t) x+ t y
\]
is a bicombing. Bicombings are a useful tool in metric fixed point theory and are employed there under various names (see \cite{itoh-1979, reich-1990, kohlenbach-2010}). In the context of geometric group theory, it has become common to study bicombings in the light of various (weak) notions of non-positive curvature. We say that a bicombing \(\sigma\) is \textit{convex} if the function
\[
t\mapsto d(\sigma_{xy}(t), \sigma_{x' y'}(t))
\]
is convex on \([0,1]\) for all \(x\), \(y\), \(x'\), \(y'\in X\). This definition is motivated by the fact that the geodesics in a \(\CAT(0)\) space, a synthetic notion of a Riemannian manifold of non-positive sectional curvature, have this property. A deep result of Descombes and Lang shows that in Banach spaces the convexity of the bicombing uniquely characterizes linear segments.

\begin{theorem}[Theorem~3.3 in \cite{descombes-2015}]\label{thm:descombes-lang}
Let \(X\) be a Banach space and \(\sigma\) a convex bicombing on \(X\). Then \(\sigma=\sigma^{\linn}\).    
\end{theorem}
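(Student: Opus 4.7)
The plan is to show that $\sigma_{xy}(t) = (1-t)x + ty$ for all $x,y \in X$ and $t \in [0,1]$, via a sequence of rigidity arguments that combine the convexity hypothesis with the fact that $\sigma^{\linn}$ is itself a convex bicombing. The first step is to establish translation invariance $\sigma_{x+v, y+v}(t) = \sigma_{xy}(t) + v$. Applying the convexity hypothesis to the pairs $(x,y)$ and $(x+v, y+v)$ shows that $t \mapsto \norm{\sigma_{xy}(t) - \sigma_{x+v, y+v}(t)}$ is a convex function on $[0,1]$ with both endpoint values equal to $\norm{v}$, hence bounded above by $\norm{v}$ throughout. Upgrading this upper bound to the pointwise identity $\sigma_{x+v, y+v}(t) = \sigma_{xy}(t) + v$ is the technical heart of the argument, and I would attack it via a Mazur--Ulam style reflection: replacing $v$ by $v/n$ and iterating along a chain of $n$ translates, one expects the $n$-fold triangle inequality to saturate in the limit $n \to \infty$, forcing each increment to be an exact translation.

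With translation invariance in hand, it suffices to prove $\sigma_{0, v}(t) = tv$ for every $v \in X$. Applying convexity to the pairs $(0, v)$ and $(0, w)$ yields the Lipschitz estimate $\norm{\sigma_{0, v}(t) - \sigma_{0, w}(t)} \leq t\norm{v - w}$. For any supporting functional $f$ of $v$ (i.e.\ a continuous linear functional $f \colon X \to \R$ with $\norm{f} = 1$ and $f(v) = \norm{v}$), the composition $t \mapsto f(\sigma_{0, v}(t))$ is $\norm{v}$-Lipschitz with boundary values $0$ and $\norm{v}$, hence must equal $tf(v)$. To extend this from supporting functionals of $v$ to arbitrary continuous linear functionals on $X$, I would apply convexity to the pairs $(0, v)$ and $(0, u_\alpha)$ with $u_\alpha = (1-\alpha)v + \alpha w$ for $w$ ranging over a neighborhood of $v$. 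The tightness of the chain of triangle inequalities between $\sigma_{0, v}(t)$, $\sigma_{0, u_\alpha}(t)$, and $\sigma_{0, w}(t)$ forces $\alpha \mapsto \sigma_{0, u_\alpha}(t)$ to be affine in $\alpha$, which together with $\sigma_{0, 0} = 0$ makes the map $v \mapsto \sigma_{0, v}(t)$ linear; combined with $\norm{\sigma_{0, v}(t)} = t\norm{v}$ this forces $\sigma_{0, v}(t) = tv$.

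The principal obstacle is the first step: the convexity of the bicombing only yields an upper bound, whereas translation invariance is a pointwise identity. In a strictly convex Banach space this step simplifies considerably, because equality cases in the triangle inequality immediately force collinearity; the general Banach space case, however, requires the more delicate metric-geometric considerations of Descombes and Lang, including reflection arguments analogous to the Mazur--Ulam theorem.
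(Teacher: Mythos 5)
Your plan founders at both of its saturation steps, and these are not repairable details but precisely the difficulty of the theorem. Convexity of the bicombing only ever delivers upper bounds: for the translates you get that \(t\mapsto\norm{\sigma_{xy}(t)-\sigma_{x+v,\,y+v}(t)}\) is convex with endpoint values \(\norm{v}\), hence at most \(\norm{v}\), and for geodesics issuing from \(0\) you get \(\norm{\sigma_{0,v}(t)-\sigma_{0,w}(t)}\le t\norm{v-w}\). There is no matching lower bound anywhere, so there is nothing for your ``\(n\)-fold chain'' or your ``tight chain of triangle inequalities'' to saturate: a priori \(\norm{\sigma_{0,v}(t)-\sigma_{0,w}(t)}\) could be strictly smaller than \(t\norm{v-w}\), in which case interposing \(\sigma_{0,u_\alpha}(t)\) forces nothing, and equality in these estimates is essentially the conclusion \(\sigma=\sigma^{\linn}\) you are trying to prove. (The Mazur--Ulam reflection trick does not transfer either: it exploits a surjective isometry between two spaces, whereas here the objects being compared are geodesics of an unknown bicombing.) The final inference is also too quick: even granting that \(v\mapsto\sigma_{0,v}(t)\) is linear, the identity \(\norm{\sigma_{0,v}(t)}=t\norm{v}\) only says that \(\sigma_{0,\cdot}(t)/t\) is a linear isometric embedding, not the identity map; and your (correct) supporting-functional observation \(f(\sigma_{0,v}(t))=tf(v)\) pins the point down only in smooth or strictly convex situations --- for \(v=(1,0)\) in \(\ell_\infty^2\) it still leaves a whole segment of admissible points \((t,s)\), \(\abs{s}\le t\).

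What is missing is a genuinely nontrivial external input, and this is where the known proofs depart from your outline. The cheapest use of convexity is to compare \(\sigma_{xy}\) with the constant geodesic \(\sigma_{zz}\), which gives \(\norm{\sigma_{xy}(t)-z}\le(1-t)\norm{x-z}+t\norm{y-z}\) for every \(z\in X\); the real content is then the theorem of G\"{a}hler and Murphy that the only point satisfying this family of inequalities is \((1-t)x+ty\), a uniqueness statement about the Doss expectation proved via the strong law of large numbers in Banach spaces, not via equality cases of the triangle inequality. Descombes and Lang argue differently again: they pass to the injective hull \(E(X)\), show that straight geodesics of \(X\) remain straight in \(E(X)\), and use the classification of injective Banach spaces to conclude that straight geodesics there are linear segments. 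Your proposal contains neither of these inputs and, as it stands, does not close.
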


We proceed by briefly explaining the different ways how this theorem may be proved. Descombes and Lang consider the metric injective hull \(E(X)\) of \(X\) which turns out to be an injective Banach space. They showed that if \(c\colon [0,1]\to X\) is a straight geodesic (meaning that \(t \mapsto \norm{x-c(t)}\) is convex for all \(x\in X\)), then \(c\) is also a straight geodesic in \(E(X)\). Using the classification of injective Banach spaces \cite{cohen-1964}, they proved by means of a direct argument that any straight curve in \(E(X)\) is a linear segment. Since any geodesic of a convex bicombing is necessarily straight, this completes the proof.  

As it turns out Theorem~\ref{thm:descombes-lang} can also be recovered by invoking a result of S.~Gähler and G. Murphy \cite{gaehler-1981} from the 80s. They studied the Doss expectation of a measure in Banach spaces. They showed that for fixed \(x\), \(y\in X\), \(t\in [0,1]\) if \(p\in X\) satisfies
\[
\norm{p-z} \leq (1-t) \norm{x-z}+ t \norm{y-z}
\]
for all \(z\in X\), then \(p=(1-t)x+ty\). Since \(p=\sigma_{xy}(t)\) has this property, Theorem~\ref{thm:descombes-lang} follows. We remark that Gähler and Murphy's result follows directly from the strong law of large numbers in Banach spaces (see \cite[Theorem~2.2.22]{molchanov-2017} for more information). Finally, yet another proof of Theorem~\ref{thm:descombes-lang} is contained in \cite[Corollary~1.3]{basso2020extending}.

\section{Proof of the main result}\label{sec:three}

Let \(U\colon X \to Y\) be as in Theorem~\ref{thm:main}. Thus, for every \(x\in X\) there exists \(\epsilon_x>0\) such that \(U\) restricted to \(B(x, \epsilon_x)\) is an isometry onto \(B(f(x), \epsilon_x)\). Clearly, \(U\) is a local homeomorphism. Thus, by \cite[Lemma~2.5]{miesch-2017} if follows that \(U\colon X \to Y\) is a covering map. Alternatively, we could also invoke \cite[Theorem~5.2]{jaramillo}. Banach spaces are contractible and thus simply-connected. Hence, it follows that \(U\) is a homeomorphism. In the following, we show that \(U\) is isometric. Let \(V_y\) denote the open ball in \(Y\) with center \(y\) and radius \(\epsilon_y:=\epsilon_{U^{-1}(y)}\). We define \(\sigma^{y}\colon V_y \times V_y \times[0,1] \to V_y\) as follows:
\[
\sigma^y(p, q, t)=U\big((1-t)U^{-1}(p)+t U^{-1}(q)\big)
\]
for all \(p\), \(q\in V_y\) and \(t\in [0,1]\). In other words, \(\sigma^y\) is the push-forward under \(U\) of the linear bicombing on the open ball in \(X\) with center \(x=U^{-1}(y)\) and radius \(\epsilon_x\). Since \(B(y, \epsilon_y)=U( B(x, \epsilon_x))\), we find that \(\sigma^y\) is a bicombing. Clearly, if \(V_{y_1} \cap V_{y_2} \neq \varnothing\), then \(\sigma^{y_1}\) and \(\sigma^{y_2}\) agree on the intersection. Moreover, each \(\sigma^y\) is consistent with taking subsegments of geodesics, that is,
\[
\sigma^y(\sigma^y_{pq}(a), \sigma^y_{pq}(b), t)=\sigma^y(p, q, (1-t)a+t b)
\]
for all \(p\), \(q\in V_y\), all \(0\leq a \leq b \leq 1\) and all \(t\in [0,1]\). This shows that the family \(\sigma^{\locc}=\{ \sigma^y \}_{y\in Y}\) is a local bicombing in the terminology of \cite{miesch-2017}. We say that a local bicombing \(\{ \sigma^y \}_{y\in Y}\) is  \textit{convex}, if each \(\sigma^y\) is a convex bicombing. As a major tool in our proof, we use the following local-to-global theorem for convex bicombings due to Miesch. See also \cite{alexander-1990, ballmann1990singular}.

\begin{theorem}[Theorem~1.1 in \cite{miesch-2017}]\label{thm:miesch}
Let \(M\) be a complete, simply-connected, length space admitting a local bicombing \(\sigma^{\locc}\). If \(\sigma^{\locc}\) is convex, then there exists a unique convex bicombing on \(M\) which is consistent with \(\sigma^{\locc}\).
\end{theorem}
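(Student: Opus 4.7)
The plan is to mimic the classical Cartan--Hadamard strategy in the purely metric setting: build global geodesics by developing arbitrary continuous paths against the local bicombing, use simple connectivity of $M$ to see that the construction depends only on the endpoints, and verify the bicombing axioms by reducing them to their local counterparts. The central new feature compared to the Riemannian case is that $M$ need not be uniquely geodesic, so convexity of $\sigma^{\locc}$ must carry essentially all of the burden.

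First I would establish a straightening procedure: any continuous path $\gamma\colon[0,1]\to M$ from $p$ to $q$ can be replaced by a local geodesic $c_\gamma\colon[0,1]\to M$ of $\sigma^{\locc}$, that is, a curve every sufficiently short subarc of which lies in a single chart $V_y$ and coincides there with the $\sigma^y$-segment connecting its endpoints. The construction is iterative: by compactness pick a partition $0=t_0<\cdots<t_n=1$ such that each $\gamma([t_{i-1},t_i])$ lies in some chart $V_{y_i}$, replace every piece by the corresponding $\sigma^{y_i}$-geodesic between its endpoints, refine, and repeat. Convexity of each $\sigma^{y_i}$ implies that the total length is monotonically non-increasing along the iteration and bounded below by $d(p,q)$, and a standard Arzel\`a--Ascoli argument produces a limit curve which is a local geodesic in the sense above.

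The main obstacle is to show that $c_\gamma$ depends only on the homotopy class of $\gamma$; this is precisely where convexity is indispensable. Given a homotopy $H\colon[0,1]^2\to M$ rel endpoints, I would subdivide $[0,1]^2$ into a grid fine enough that each sub-square maps into a single chart of $\sigma^{\locc}$. Convexity of the local bicombing on each chart forces the straightening procedure to be invariant under the elementary move across a single sub-square (this is the two-dimensional heart of the argument and corresponds to the classical comparison estimate), and propagating these elementary moves along the grid yields $c_{\gamma_0}=c_{\gamma_1}$ whenever $\gamma_0$ and $\gamma_1$ are homotopic rel endpoints. Since $M$ is simply connected, $c_\gamma$ depends only on $p$ and $q$, and one sets $\sigma_{pq}:=c_\gamma$.

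Finally, that $\sigma$ is a convex bicombing consistent with $\sigma^{\locc}$ is again a local-to-global statement: convexity of the function $t\mapsto d(\sigma_{pq}(t),\sigma_{p'q'}(t))$ on $[0,1]$ can be checked on a finite cover by short subintervals along which the four curves lie in single charts, and there it reduces to the convexity of the appropriate $\sigma^y$. Uniqueness follows in the same spirit: any other convex bicombing consistent with $\sigma^{\locc}$ coincides with $\sigma$ on short subsegments and, by a chain-of-charts propagation argument along the image of each $\sigma_{pq}$, must equal $\sigma$ globally. I expect the technical heart of the proof to lie in the homotopy invariance step of the third paragraph, since that is exactly where the Cartan--Hadamard-type comparison argument has to be executed in a general, non-uniquely-geodesic metric space.
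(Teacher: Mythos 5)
This theorem is not proved in the paper at all; it is imported verbatim from Miesch, whose argument follows the Alexander--Bishop/Ballmann scheme for the metric Cartan--Hadamard theorem: consistent local geodesics are shown to be unique and to depend continuously on their endpoints via Cauchy-type estimates coming from convexity, they are then continued by an open-closed argument, an exponential-type map on the space of consistent geodesics issuing from a point is shown to be a covering, and simple connectivity plus a careful limiting argument globalizes convexity. Your outline has the right overall shape (straighten paths, use simple connectivity for endpoint-dependence, globalize), but two of your key steps have genuine gaps. First, the Arzel\`a--Ascoli argument in your straightening step requires pointwise relative compactness, i.e.\ essentially a proper (locally compact) space. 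The theorem makes no such assumption, and in the present paper it is applied precisely to infinite-dimensional Banach spaces, where bounded sets are not relatively compact; so the curve-shortening-plus-compactness construction does not get off the ground in the intended generality. This is exactly why the published proofs replace compactness by completeness plus convexity: two consistent local geodesics with nearby endpoints are uniformly close (a convexity estimate), so approximating sequences of developed geodesics are Cauchy rather than merely equicontinuous.

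Second, your final step --- checking convexity of \(t\mapsto d(\sigma_{pq}(t),\sigma_{p'q'}(t))\) ``on short subintervals along which the four curves lie in single charts'' --- does not work, because the two geodesics being compared are in general far apart and never lie in a common chart; convexity of \(\sigma^{\locc}\) only controls pairs of geodesics inside one chart \(V_y\). Localizing in the parameter \(t\) does not localize the two curves to a common neighborhood. The actual globalization of convexity is the delicate part of the theorem: one connects \(\sigma_{pq}(t)\) to \(\sigma_{p'q'}(t)\) by the already-constructed geodesics, subdivides this ruled family finely enough that consecutive members are chart-close, applies local convexity to neighboring pairs, and passes to a limit over refinements; simple connectivity enters here as well (on a flat cylinder, for instance, local convexity holds but global convexity fails), so no purely chart-local verification can suffice. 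Relatedly, your homotopy-invariance step leans on the straightening procedure being well defined and invariant under elementary grid moves, but since the iterative straightening may depend on the chosen partitions, the clean statement one actually needs (and what Miesch proves) is uniqueness of consistent local geodesics within a homotopy class, again obtained from convexity-based continuity estimates rather than from the shortening process itself.
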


For the definition of a length space we refer the interested reader to \cite{burago-2001}. For our purposes, it suffices to know that Banach spaces are length spaces. Hence, Theorem~\ref{thm:miesch} is applicable in our situation. A geodesic \(c\colon [0,1] \to Y\) is \textit{consistent} with \(\sigma^{\locc}\) if for all \(0 \leq a \leq b \leq 1\) for which there exists \(y\in Y\) such that \(c(a)\), \(c(b)\in V_y\), we have
\begin{equation}\label{eq:consistency}
c((1-t)a+tb)=\sigma^y(c(a), c(b), t) 
\end{equation}
for all \(t\in [0,1]\). By Theorem~\ref{thm:miesch}, it follows that there is a convex bicombing \(\sigma\) on \(Y\) such that
each geodesic \(\sigma(y_1, y_2, \cdot)\), for \(y_1\), \(y_2\in Y\), is consistent with \(\sigma^{\locc}\).  But, because of Theorem~\ref{thm:descombes-lang}, we know that the geodesics of \(\sigma\) are linear segments, that is,
\begin{equation}\label{eq:linear-segments}
\sigma(y_1, y_2, t)=(1-t) y_1+t y_2
\end{equation}
for all \(y_1\), \(y_2\in Y\) and \(t\in [0,1]\). Therefore, by combing \eqref{eq:consistency} with \eqref{eq:linear-segments}, we find that \(U\) is locally affine. More precisely, for every \(x_0\in X\), it follows that
\[
U\big((1-t)x_1+t x_2\big)=(1-t)U(x_1)+t U(x_2)
\]
for all \(x_1\), \(x_2\in B(x_0, \epsilon_{x_0})\). With this at hand, it is now not difficult to finish the proof. Indeed, a straightforward (but slightly tedious) proof shows that every locally affine map is globally affine. 

\begin{lemma}\label{lem:aux}
Let \(U\colon X\to Y\) be a continuous, locally affine map between Banach spaces. Then 
\begin{equation}\label{eq:what-we-want}
U\big( (1-t) x+t y\big)=(1-t) U(x)+t U(y)
\end{equation}
for all \(x\), \(y\in X\) and \(t\in [0,1]\), that is, \(U\) is (globally) affine.
\end{lemma}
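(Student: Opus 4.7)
The plan is to fix $x, y \in X$ and establish \eqref{eq:what-we-want} by propagating the local affineness of $U$ along the line segment $\gamma \colon [0,1] \to X$, $\gamma(s) = (1-s)x + sy$. Since $\gamma([0,1])$ is compact, I would first cover it by finitely many open balls $B_1, \ldots, B_n$ on each of which $U$ is affine. The preimages $\gamma^{-1}(B_i)$ form an open cover of $[0,1]$, so by the Lebesgue number lemma there exists a partition $0 = s_0 < s_1 < \cdots < s_N = 1$ fine enough that each \emph{double} interval $[s_j, s_{j+2}]$ lies entirely inside some $\gamma^{-1}(B_{i_j})$.

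Next, on each ball $B_{i_j}$ I would write an arbitrary $\gamma(s)$ with $s \in [s_j, s_{j+2}]$ as the affine combination $(1-u)\gamma(s_j) + u\gamma(s_{j+2})$ with $u = (s-s_j)/(s_{j+2}-s_j)$; applying local affineness in $B_{i_j}$ gives $U(\gamma(s)) = (1-u)U(\gamma(s_j)) + uU(\gamma(s_{j+2}))$. Rewriting in terms of $s$, there are constants $\alpha_j, \beta_j \in Y$ with $U(\gamma(s)) = \alpha_j + s\beta_j$ throughout $[s_j, s_{j+2}]$.

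The gluing step is the heart of the argument: consecutive pieces share the non-degenerate overlap interval $[s_{j+1}, s_{j+2}]$, and two $Y$-valued affine functions of $s$ that agree on an interval of positive length must have identical coefficients. A short induction on $j$ therefore produces constants $\alpha, \beta \in Y$ with $U(\gamma(s)) = \alpha + s\beta$ on all of $[0,1]$; evaluating at the endpoints identifies $\alpha = U(x)$ and $\alpha + \beta = U(y)$, which is exactly \eqref{eq:what-we-want}.

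I expect the argument to be essentially routine once the partition is set up correctly. The only technical point worth flagging is the use of the double intervals $[s_j, s_{j+2}]$ in place of single subintervals of the partition: this is what guarantees that consecutive affine pieces overlap on a non-degenerate subinterval rather than only at a single point, so that matching affine coefficients is forced and not merely matching endpoint values. The continuity hypothesis on $U$ is not actually invoked in this compactness-based scheme, but it is harmless and automatic in the application to Theorem~\ref{thm:main}.
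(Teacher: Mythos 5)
Your argument is correct, but it takes a genuinely different route from the paper. The paper fixes distinct \(x\), \(y\), writes \(x_t=(1-t)x+ty\), and considers the set \(I\) of parameters \(t\) for which the affine identity holds along the segment from \(x_0\) to \(x_t\); it shows \(I\) is closed in \([0,1]\) (this is exactly where the continuity hypothesis is used) and open (via an explicit, admittedly tedious, reparametrization computation using local affineness near \(x_t\)), so \(I=[0,1]\) by connectedness. You instead use compactness of the segment: finitely many balls on which \(U\) is affine, a Lebesgue-number partition fine enough that each double interval \([s_j,s_{j+2}]\) maps into a single ball, affineness of \(U\circ\gamma\) in the parameter on each such interval, and the elementary fact that two \(Y\)-valued affine functions of \(s\) agreeing on an interval of positive length have identical coefficients; your double-interval device is precisely what guarantees nondegenerate overlaps, so the gluing step is sound, and evaluating at the endpoints yields \eqref{eq:what-we-want}. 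What your scheme buys: it replaces the paper's computation by a clean coefficient-matching argument and, as you observe, never invokes continuity of \(U\), so it actually proves the slightly stronger statement that every locally affine map is globally affine; what the paper's open-and-closed argument buys is that it stays entirely on the segment, with no covering or partition bookkeeping. Either proof suffices for the application in Theorem~\ref{thm:main}.
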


\begin{proof}
Fix distinct \(x\), \(y\in X\) and abbreviate \(x_t=(1-t) x+t y\) for all \(t\in [0,1]\).
Consider the set
\[
I=\big\{ t\in [0,1] : \forall s\in [0,1]: \, U\big((1-s) x_0+s x_t\big)=(1-s) U(x_0)+s U(x_t)\big\}.
\]
Clearly, \(I\) is non-empty. In the following, we show that \(I\) is both an open and closed subset of \([0,1]\). This implies that \(I=[0,1]\), and so in particular \eqref{eq:what-we-want} holds.

Suppose \((t_i)\), with \(t_i\in I\), is a convergent sequence with limit \(t\). For each \(s\in [0,1]\), we have
\[
U\big((1-s) x_0+s x_t\big)=\lim_{i\to \infty} U\big((1-s) x_0+s x_{t_i}\big),
\]
since \(U\) is continuous. Thus, using the definition of \(I\) and the continuity of \(U\) once again, we get that \(I\) is a closed subset of \([0,1]\).

Now, let \(t\in I\). We want to show that there exists \(\epsilon>0\) such that the open ball in \([0,1]\) with center \(t\) and radius \(\epsilon\) is contained in \(I\). Since \(U\) is locally affine, this is certainly true if \(t=0\). Moreover, by a direct computation, we find that if \(0\leq t' \leq t\), then \(t'\in I\). Thus, the statement also follows when \(t=1\). We now treat the remaining case when \(t\in I \cap (0,1) \). Again, since \(U\) is locally affine, there exist \(a\), \(b\in [0,1]\) with \(a< b\)  such that \(t=(a+b)/2\) and 
\[
U((1-s) x_a+s x_b)=(1-s) U(x_a)+sU(x_b)
\]
for all \(s\in [0,1]\). We claim that 
\begin{equation}\label{eq:affine-for-b}
U((1-s) x_0+s x_b)=(1-s) U(x_0)+sU(x_b)
\end{equation}
for all \(s\in [0,1]\). Let \(u\), \(v\in [0,1]\) be such that
\[
x_t=(1-u)x_0+u x_b \quad \text{ and } \quad x_a=(1-v) x_0+v x_t.
\]
In particular, \(ub=t\) and \(rv=a\). We have
\[
U(x_t)=\tfrac{1}{2} U(x_a)+\tfrac{1}{2} U(x_b)=\tfrac{(1-v)}{2} U(x_0)+ \tfrac{v}{2} U(x_t)+\tfrac{1}{2} U(x_b)
\]
and so
\[
U(x_t)= (1-\tfrac{1}{2-v}) U(x_0)+\tfrac{1}{2-v} U(x_b).
\]
A short computation reveals that \(u=1/(2-v)\), showing that \eqref{eq:affine-for-b} holds for \(s=u\). Let now \(0\leq s\leq u\). Then
\begin{align*}
U((1-s) x_0+s x_b)&=U( (1-\tfrac{s}{u}) x_0+\tfrac{s}{u} x_t)=(1-\tfrac{s}{u}) U(x_0)+ \tfrac{s}{u} U( x_t) \\
&=(1-\tfrac{s}{u}) U(x_0)+(\tfrac{s}{u}-s) U(x_0)+s U(x_b), 
\end{align*}
where we used that \eqref{eq:affine-for-b} is valid for \(s=u\) for the third equality. Hence, \eqref{eq:affine-for-b} holds for all \(0 \leq s \leq u\). The case \(u \leq s \leq 1\) follows with an analogous computation. This proves that \(b\in I\) and so \([0, b]\subset I\). Since \(0<t<b\), an open neighborhood of \(t\) is contained in \(I\), as desired. This finishes the proof showing that \(I\) is an open subset of \([0,1]\).
\end{proof}

Thus, by Lemma~\ref{lem:aux}, it follows that \(U\) is globally affine. Without loss of generality, we may suppose that \(U(0)=0\) and so \(U\) is a linear map. By assumption, there exists \(\epsilon>0\) such that \(\norm{U(x)}=\norm{x}\) for all \(x\in B(0, \epsilon)\). Let \(x\in X\). Then by setting \(r=\norm{x}/ \epsilon\), we find that
\[
\norm{U(x)}=r \norm{ U(\tfrac{x}{r})}=r \norm{\tfrac{x}{r}}=\norm{x}.
\]
This shows that \(U\) is an isometry, as desired. We finish this section with the straightforward proof of Corollary~\ref{cor:main}.
\begin{proof}[Proof of Corollary~\ref{cor:main}]
Let \(U\colon X \to Y\) be a locally isometric bijection such that \(U^{-1}\) is continuous. We want to conclude that \(U\) is an isometry. In view of Theorem~\ref{thm:main}, it suffices to show that \(U\) is a local isometry. Fix \(x\in X\) and set \(y=U(x)\). Let \(\epsilon>0\) be such that \(U\) restricted to \(B(x, \epsilon)\) is isometric. As \(U^{-1}\) is continuous, there exists \(\delta>0\) such that \(B(y, \delta)\subset U(B(x, \epsilon))\). This implies that \(U(B(x, \delta))=B(y, \delta)\), and so \(U\) clearly is a local isometry.
\end{proof}

\let\oldbibliography\thebibliography
\renewcommand{\thebibliography}[1]{\oldbibliography{#1}
\setlength{\itemsep}{3.5pt}}
\bibliographystyle{alpha}
\bibliography{sample}

\end{document}